\documentclass[10pt]{amsart}
\usepackage{amsmath,amssymb,amsthm,amsfonts}
\usepackage{graphicx}
\usepackage{tikz}

\setlength\parindent{0pt}

\theoremstyle{plain}
\newtheorem{lemma}{Lemma}

\newtheorem*{theorem}{Theorem}
\newtheorem*{proposition}{Proposition}

\begin{document}

\title[]{Dirichlet eigenfunctions with \\nonzero mean value}

\author[]{Stefan Steinerberger}
\address{Department of Mathematics, University of Washington, Seattle}
\email{steinerb@uw.edu}

\author[]{Raghavendra Venkatraman}
\address{Department of Mathematics, The University of Utah, Salt Lake City}
\email{raghav@math.utah.edu}

\subjclass[2020]{35P05, 35K05}
\keywords{Laplacian eigenfunctions, Mean Value, Heat equation}
\thanks{ SS has been supported by the NSF (DMS-2123224). RV acknowledges support from the Simons foundation (733694) and the NSF (DMS-2407592); in addition, RV warmly thanks Fanghua Lin and Bob Kohn for helpful discussions.}
\date{}

\begin{abstract} We consider Laplacian eigenfunctions on a domain $\Omega \subset \mathbb{R}^d$. Under Neumann boundary conditions, the first eigenfunction is constant and the others have mean value 0. The situation is different for Dirichlet boundary conditions: on `generic' domains, one would expect that every eigenfunction has nonzero mean value. The other extreme is the ball in $\mathbb{R}^d$, where among the first $n$ eigenfunctions only $\sim n^{1/d}$ have a mean value different from zero. We prove that this rate is sharp in \textit{any} smooth domain, up to a logarithmic factor: in any smooth domain~$\Omega$, among the first $n$ Dirichlet eigenfunctions at least $ (\log{n})^{-1/2} \cdot n^{1/d}  $ have a nonzero mean.
\end{abstract}

\maketitle

\section{Introduction}
\subsection{Introduction.} Let $\Omega \subset \mathbb{R}^d$ be a bounded domain with smooth boundary. We consider the eigenfunctions of the Laplacian
$$ -\Delta \phi_k = \lambda_k \phi_k$$
subject to Dirichlet boundary conditions $\phi_k\big|_{\partial \Omega} = 0$. We always assume that they are normalized $\|\phi_k\|_{L^2} = 1$ in $L^2(\Omega)$. It is easy to see that if we were to impose Neumann boundary conditions instead, then the first eigenfunction is constant and so, by orthogonality, all other Neumann eigenfunctions have mean value 0. The situation for Dirichlet boundary conditions is very different: the first eigenfunction $\phi_1$  does not change sign, and so it  does not have vanishing mean value. However, since $\phi_1$ is not constant, orthogonality with $\phi_1$ says little about the mean value of the other eigenfunctions. Unless $\Omega$ has symmetries, one would typically expect that every single eigenfunction $\phi_k$ has a nonzero average. We do not know whether this is the case and believe it could be an interesting problem (see also \cite{hempel}). We will be working on the opposite problem and argue that there cannot be too many eigenfunctions with mean value 0. We start with some examples.

\subsection{Two examples.} The cube $[0,1]^d$ shows that a positive proportion of Dirichlet eigenfunctions can have vanishing mean value. Without normalizing, the eigenfunctions are
$$ \phi(x) = \prod_{j=1}^{d} \sin\left( a_i \pi x_i \right) \qquad \mbox{where}~a_i \in \mathbb{N}.$$
When integrating the eigenfunction $\phi(x)$, the integral decouples into the product of $d$ one-dimensional integrals. In order for the product to be nonzero, all individual integrals have to be nonzero, which forces all $a_i$ to be even integers. The eigenfunctions with eigenvalue $\leq \lambda$ are exactly those lattice points for which $a_1^2 + a_2^2 + \dots + a_d^2 \leq \lambda^2 \pi^{-2}$ which is a sphere. The number of lattice points in $(2\mathbb{Z})^d$ has density $2^{-d}$ with respect to $\mathbb{Z}^d$. This shows that only a small fraction, $1/2^d$, of all the eigenfunctions have a mean value different from 0. Most Dirichlet eigenfunctions, asymptotically of density $1-1/2^{d}$, have vanishing mean. The ball $\mathbb{B} \subset \mathbb{R}^d$ is another interesting example. Writing 
$$ \Delta f = \frac{\partial^2 f}{\partial r^2} + \frac{d-1}{r} \frac{\partial f}{\partial r} + \frac{1}{r^2} \Delta_{\mathbb{S}^{d-1}} f$$
a separation of variables shows that Dirichlet eigenfunctions of the Laplacian on the ball $\mathbb{B}$ are given as the product of radial solutions of the Bessel equation with eigenfunction of the Laplace Beltrami operator~$\Delta_{\mathbb{S}^{d-1}}$ on $\mathbb{S}^{d-1}$ in the angular components. Since the sphere $\mathbb{S}^{d-1}$ has no boundary, the first eigenfunction of~$\Delta_{\mathbb{S}^{d-1}}$ is simply constant, and so all the remaining eigenfunctions~ of~$\Delta_{\mathbb{S}^{d-1}}$ on the sphere to have mean value 0. For a product of a Bessel function with a spherical harmonic to have nonzero mean, we thus require the spherical harmonic to be constant which leaves us with a pure radial Bessel function solving the radial part of the equation $-y''(r) - (d-1) y'(r)/r = \lambda y(r) $. Then, counting the number of zeroes of the solutions to this equation (which equals the number of such purely radial eigenfunctions) (see \cite{feng}), we conclude that among the first $n$ eigenfunctions at most $\sim n^{1/d}$ have nonzero mean value. 

\smallskip 
There is a simple reason why in \emph{any} domain~$\Omega$, there are always infinitely many Laplacian eigenfunctions whose mean value is different from zero.
\begin{proposition}
 Infinitely many Dirichlet eigenfunctions have nonzero mean value.
\end{proposition}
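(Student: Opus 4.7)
The plan is to argue by contradiction, exploiting the fact that the constant function $1 \in L^2(\Omega)$ admits an expansion in the orthonormal basis of Dirichlet eigenfunctions, and that any finite eigenfunction expansion must vanish on $\partial \Omega$. Concretely, suppose for contradiction that only finitely many of the $\phi_k$ have nonzero mean value, and let $S \subset \mathbb{N}$ denote this finite index set. Since $\{\phi_k\}_{k\geq 1}$ is an orthonormal basis of $L^2(\Omega)$, I would expand
$$
1 \;=\; \sum_{k=1}^{\infty} \left( \int_\Omega \phi_k \, dx \right) \phi_k \qquad \text{in } L^2(\Omega),
$$
and observe that by assumption this sum reduces to the finite sum over $k \in S$.

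Next I would note that a finite linear combination of Dirichlet eigenfunctions is smooth in $\Omega$ and extends continuously to $\overline{\Omega}$ with boundary values zero; this uses only the standard elliptic regularity that each $\phi_k$ lies in $C^\infty(\Omega) \cap C^0(\overline{\Omega})$ for a domain with $C^1$ boundary, together with the Dirichlet condition $\phi_k|_{\partial \Omega} = 0$. Therefore the right-hand side is continuous on $\overline{\Omega}$ and vanishes on $\partial \Omega$, while the left-hand side is the constant function $1$. Since both sides are continuous and agree almost everywhere in $\Omega$, they agree pointwise, and letting $x \to \partial \Omega$ yields $1 = 0$, a contradiction.

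As a sanity-check that this is consistent with quantitative information, one can pair with the torsion function $u$ solving $-\Delta u = 1$ in $\Omega$, $u|_{\partial \Omega}=0$: Green's identity gives $\int_\Omega \phi_k \, dx = \lambda_k \langle u, \phi_k \rangle$, so Parseval applied to $u$ yields $\sum_k \lambda_k^{-2} |\int_\Omega \phi_k|^2 = \|u\|_{L^2}^2 < \infty$. This confirms that the mean values decay but shows no single mean value can force the others to vanish.

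The argument has essentially no obstacle; the only subtlety is verifying the boundary continuity of each eigenfunction (and hence of any finite combination), which is where the $C^1$ regularity hypothesis on $\partial\Omega$ is used. No quantitative estimates on how many of the first $n$ eigenfunctions have nonzero mean are obtained from this approach — for that one would need the much more refined analysis carried out later in the paper.
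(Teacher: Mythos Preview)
Your proposal is correct and follows essentially the same argument as the paper: expand the constant function $1$ in the Dirichlet eigenfunction basis, note that a finite expansion would represent $1$ as a finite combination of functions vanishing on $\partial\Omega$, and derive a contradiction. The additional remarks on regularity and the torsion-function sanity check are correct but not needed for the proof itself.
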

\begin{proof} Suppose there are only finitely many and consider the constant function $f(x) = 1$. Since the Dirichlet eigenfunctions form an orthonormal basis, we have
$$f(x) = \sum_{k=1}^{\infty} \left\langle f, \phi_k \right\rangle \phi_k(x) = \sum_{k=1}^{\infty} \left( \int_{\Omega} \phi_k(x) dx \right) \phi_k(x).$$
The sum has to range over infinitely many terms since otherwise we could write the constant function $f(x) =1$ as a finite linear combination of smooth functions that vanish on the boundary. Thus infinitely many of the eigenfunctions must have nonzero mean value.
\end{proof}

\subsection{The result} We are interested in trying to understand whether the number of eigenfunctions with nonvanishing mean could be further quantified. We were originally led to this problem through the work of the second named author on a problem in photonics (see \S\ref{ss.photonics} for a brief description). 
 
\begin{theorem} \label{t.main} Let $\Omega \subset \mathbb{R}^d$ be a bounded domain with smooth boundary and let $(\phi_k)_{k=1}^{\infty}$ denote a sequence of Laplacian eigenfunctions with Dirichlet boundary conditions. There exists a constant $c_{\Omega} > 0$ such that, for all $n$ sufficiently large,
$$ \# \left\{ 1 \leq k \leq n:  \int_{\Omega} \phi_k(x) \neq 0\right\} \geq c_{\Omega} \frac{n^{1/d}}{\sqrt{\log n}}.$$
\end{theorem}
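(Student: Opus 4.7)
My plan is to exploit the spectral expansion $1 = \sum_k a_k \phi_k$ in $L^2(\Omega)$, where $a_k = \int_\Omega \phi_k\,dx$. Parseval gives $\sum_k a_k^2 = |\Omega|$, the starting point. From here I would combine two further ingredients: a heat-content tail bound (most of the $L^2$-mass of $1$ lies in the first $n$ modes), and a per-coefficient estimate on $|a_k|$ (to force the mass to be spread over many modes).

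First, the heat-content tail estimate. Let $u(t,\cdot) = e^{t\Delta}1 = \sum_k a_k e^{-\lambda_k t}\phi_k$ be the Dirichlet heat evolution of the constant function $1$; its heat content $H(t) = \int_\Omega u\,dx = \sum_k a_k^2 e^{-\lambda_k t}$ satisfies $|\Omega| - H(t) \leq C\sqrt{t}$ for small $t$, a standard consequence of the Gaussian upper bound on the Dirichlet heat kernel. Since $1 - e^{-s} \geq 1/2$ for $s \geq 1$, applied at $t = 1/\Lambda$ this yields $\sum_{\lambda_k \geq \Lambda} a_k^2 \leq 2C/\sqrt{\Lambda}$; combined with Weyl's law $\lambda_n \sim c_d (n/|\Omega|)^{2/d}$, taking $\Lambda = \lambda_n$ gives $\sum_{k > n} a_k^2 \leq Cn^{-1/d}$, equivalently $\sum_{k \leq n} a_k^2 \geq |\Omega| - Cn^{-1/d}$.

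Second, an individual control on $|a_k|$. Integration by parts against the eigenfunction equation gives $a_k = -\lambda_k^{-1}\int_{\partial\Omega} \partial_\nu \phi_k\,d\sigma$, and a Rellich-type $L^2$-trace bound $\|\partial_\nu\phi_k\|_{L^2(\partial\Omega)}^2 \leq C \lambda_k$ then yields $|a_k|^2 \leq C/\lambda_k$.

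With these in hand, let $N = \#\{1 \leq k \leq n : a_k \neq 0\}$ and enumerate the support indices as $k_1 < \cdots < k_N$; since $k_j \geq j$, Weyl gives $a_{k_j}^2 \leq C j^{-2/d}$. To convert the $L^2$-mass lower bound into a lower bound on $N$ that grows polynomially in $n$, I would weight the spectral sum by $\lambda_k$: the differentiated heat-content identity $-H'(t) = \sum_k \lambda_k a_k^2 e^{-\lambda_k t}$ is of order $c_\Omega/\sqrt{t}$ as $t \to 0^+$, and choosing $t \sim (\log n)/\lambda_n$ renders the high-frequency tail $\sum_{k > n} \lambda_k a_k^2 e^{-\lambda_k t}$ polynomially small in $n$; combined with the per-term bound $\lambda_k a_k^2 \leq C$, this yields the desired lower bound on $N$, with the factor $(\log n)^{d/2}$ arising from the chosen time scale.

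The main obstacle is this last step: the naive combination of the $L^2$-mass bound $\sum_{k \leq n,\,a_k \neq 0} a_k^2 \geq |\Omega| - o(1)$ with the per-coefficient bound $a_{k_j}^2 \leq Cj^{-2/d}$ yields only a constant lower bound on $N$, and extracting a genuine power of $n$ requires the amplification through the $\lambda_k$-weighted heat content at the right time scale; this in turn requires a careful treatment of the heat-content asymptotics under the minimal $C^1$-regularity of the boundary, which governs the precise quality of the leading $\sqrt{t}$ asymptotic and hence the balance between tail size and number of retained modes.
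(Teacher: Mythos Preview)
Your approach shares the paper's overall architecture---heat flow plus the Hassell--Tao bound $|a_k|\le C\lambda_k^{-1/2}$ plus a Weyl-law tail cutoff---but differs in the choice of initial datum, and that difference actually makes your route \emph{sharper}. The paper evolves the boundary-layer indicator $f=\mathbf{1}_{\{d(\cdot,\partial\Omega)\le\varepsilon\}}$; the spectral expansion of $\int_\Omega e^{t\Delta}f$ then contains the product $\bigl(\int_{d\le\varepsilon}\phi_k\bigr)\cdot a_k$, and bounding the first factor by $\sqrt{\varepsilon}$ via Cauchy--Schwarz is precisely the lossy step that produces the exponent $1/(2d)$ rather than $1/d$ (the paper says so explicitly in its closing remark). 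Your choice $f=1$ replaces that product by $a_k^2$, so Hassell--Tao yields the clean per-term bound $\lambda_k a_k^2\le C$; combined with $-H'(t)\gtrsim t^{-1/2}$ and the tail cutoff at $t\sim(\log n)/\lambda_n$, one gets $N\gtrsim t^{-1/2}\sim n^{1/d}/\sqrt{\log n}$, which matches (up to the logarithm) the conjecturally optimal rate realized by the ball---stronger than the theorem as stated. A few small corrections: your claimed log loss $(\log n)^{d/2}$ should be $(\log n)^{1/2}$; the lower bound $-H'(t)\gtrsim t^{-1/2}$ that you flag as the obstacle follows from the two-sided estimate $c\sqrt{t}\le|\Omega|-H(t)\le C\sqrt{t}$ together with the convexity of $H$ (or, to avoid the derivative, work with $H(t)-H(Mt)$ for a fixed large $M$), and those two-sided bounds for $C^1$ boundary come from the same reflection-principle reasoning the paper uses in its Lemma~1; finally, your first two ingredients (Parseval and the unweighted tail of $\sum a_k^2$) are, as you yourself note, not actually used in the decisive step and can be dropped.
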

\textbf{Remarks.} Several remarks are in order.
\begin{enumerate}
    \item We did not optimize for regularity of the boundary; presumably the result is true in much rougher domains.
    \item We note that, especially in the presence of symmetries, eigenvalues may have higher multiplicity and the choice of eigenfunctions $\phi_k$ would then not be unique. Our Theorem is independent of the particular choice of basis and true for all possible choices of an eigenbasis of $L^2(\Omega)$.
    \item  The natural expectation would be that the ball is extremal and that the correct rate is given by $\sim n^{1/d}$. One might also be tempted to conjecture that, unless~$\Omega$ is very symmetric (like a ball, or an annulus) the density of Dirichlet eigenfunctions with nonzero mean is \emph{positive} (see \S\ref{ss.questions} below for a precise formulation). The logarithmic factor is presumably an artifact of the proof; a different approach may conceivably be able to remove this factor altogether.  
\end{enumerate}

\subsection{Related questions.}\label{ss.questions} The result suggests a number of interesting related questions. Simply restricting to the case of planar domains $\Omega \subset \mathbb{R}^2$, we see that it's possible that most Dirichlet eigenfunctions have vanishing mean value (which happens when $\Omega$ is a disk, for example) and it is also possible that $3/4$ of the Dirichlet eigenfunctions have vanishing mean value, this happens when $\Omega = [0,1]^2$. 

\begin{quote}
    \textbf{Question.} What are the possible values of
 $$   \lim_{n \rightarrow \infty} \frac{1}{n}  \# \left\{ 1 \leq k \leq n:  \int_{\Omega} \phi_k(x) \neq 0 \right\} \qquad ?$$
 When $d=2$, the examples above show that the limit can be in $\left\{0, 1/4\right\}$. We expect it to be 1 for generic domains. Can it attain many more values? How do these values depend on the domain?
\end{quote}

It would be nice to have more examples: the Dirichlet eigenfunctions are explicit on an equilateral triangle \cite{pinksy} but the arising computations appear nontrivial. We note that symmetry considerations allow to deduce a little bit more information: for example, if $\Omega \subset \mathbb{R}^{d-1}$ is arbitrary, then the domain $\Omega \times [0,1] \subset \mathbb{R}^d$ has the property that at least half the Dirichlet eigenfunctions have mean value 0. We also note that the question may be rendered trivial by the unit disk: on the unit disk $B(0,1) \subset \mathbb{R}^2$ \textit{most} eigenfunctions have mean value 0. If we take an eigenvalue $\lambda_k$, then the corresponding eigenspace has one radial eigenfunction (with nonvanishing mean) and a number of other eigenfunctions that all have mean value 0.  We could perform a change of basis and replaces that basis by doing some orthogonal rotation in a subspace of that eigenspace: with that procedure we should be able to achieve any arbitrary limit for the question above. This is presumably only possible on the unit ball. The other way to interpret the question is to take among all eigenspaces the one that minimizes the above ratio. Another very natural question is whether the ball is characterized by the number of eigenfunctions with vanishing mean -- this question is so nice that we emphasize it.

\begin{quote}
    \textbf{Problem.} Let $\Omega \subset \mathbb{R}^d$ be a bounded, connected domain with the property that there exist Dirichlet eigenfunctions $(\phi_k)_{k=1}^{\infty}$ with the property that among the first $n$ eigenfunctions, there are $\leq c \cdot n^{1/d}$ eigenfunctions with vanishing mean. Does $\Omega$ need to be a ball or an annulus? More generally, what properties does $\Omega$ need to have?
\end{quote}

 Another interesting question, inspired by the fact that the density of non-zero mean value eigenfunctions on $[0,1]^d$ is $1/2^d$ is whether each `symmetry' (suitably interpreted) reduces the proportion of eigenfunctions with nonzero mean by a half. 
 
\subsection{Motivation from photonics} \label{ss.photonics} The set of Dirichlet eigenvalues whose associated eigenfunctions have nonzero mean appear in a number of problems on divergence form equations with piecewise constant, high contrast coefficients (see~\cite{BF, KV1} and references therein).  Our motivation comes from work by the second named author (jointly with Robert V. Kohn) on a class of photonic devices (novel types of waveguides, resonators, antennas) referred to as ``epsilon-near-zero'' devices. Referring the interested reader to~\cite{nader} for a survey of the physics, the mathematical problems one is led to consider, in the simplest settings, concern divergence form operators with piecewise-constant complex coefficients~$\mathrm{div}\,(\varepsilon^{-1}\nabla u)$ for the in-plane magnetic field~$u,$ with the following crucial feature: the dielectric permittivity~$\varepsilon \approx 0$ in part of the domain. Motivated by~\cite{geom}, in~\cite{KV2}, we study resonators: roughly speaking, we seek nontrivial \emph{Neumann} eigenfunctions of the operator~$\mathcal{L}_\delta := \mathrm{div}(\varepsilon^{-1}\nabla u)$ in a regular bounded domain~$A \subset \mathbb{R}^2$. Here,~$\varepsilon(x) = \delta \in \mathbb{C}, |\delta|\ll 1$ in~$A \setminus \overline{\Omega},$ where~$\Omega$ is a regular subdomain of~$A,$ and~$\varepsilon(x) = 1$ for~$x\in\Omega.$ In~\cite{KV2} we show that between every successive pair of \emph{Dirichlet eigenvalues of~$-\Delta$ in the subdomain~$\Omega$ whose associated eigenfunctions have nonzero mean,} the divergence form operator~$\mathcal{L}_\delta$ in~$A$ admits a simple Neumann eigenvalue, which depends complex analytically on~$\delta$ in a neighborhood of the origin. The question of how these Neumann eigenvalues of~$\mathcal{L}_\delta$ in the domain~$A$ are distributed leads to the problem considered in this paper. \\ 

 \textbf{Acknowledgment.} We are indebted to a very careful referee who informed us that \cite[Lemma 3.2]{hassell} in Hassell-Tao is much stronger than we originally used. This led to an improved final result. 

\section{Proof of the Theorem}
\subsection{Main idea.}
The main idea behind the proof is as follows: suppose first that ``virtually all'' Dirichlet eigenfunctions of the domain~$\Omega$ had mean value 0. Then the solution of the heat equation
$$ e^{t\Delta} f = \sum_{k=1}^{\infty} e^{-\lambda_k t} \left\langle f, \phi_k\right\rangle \phi_k\,,$$
would `nearly' preserve the mean value of its solution in a suitable sense. We define $A \subset \mathbb{N}$ to be the set of eigenfunctions with nonvanishing mean value
$$ A = \left\{n \in \mathbb{N}: \int_{\Omega} \phi_n(x) dx \neq 0 \right\}.$$
Our goal is to show that $A$ cannot be too small. A simple computation shows that
\begin{equation} \label{e.simple}
    \begin{aligned}
        \int_{\Omega} (e^{t\Delta} f)(x) dx &= \int_{\Omega} \sum_{k=1}^{\infty} e^{-\lambda_k t} \left\langle f, \phi_k\right\rangle \phi_k(x) dx \\
 &= \sum_{k \in A} e^{-\lambda_k t} \left\langle f, \phi_k\right\rangle \left( \int_{\Omega} \phi_k(x) dx \right)\,.
    \end{aligned}
\end{equation}

If $A$ was an extremely sparse set, then the evolution of mean values under the heat equation would be incredibly rigid. The idea is then to study one particular solution of the heat equation whose mean value undergoes rapid change in time. It makes sense that a good choice of an initial data to capture this effect is a function that is localized near the boundary~$\partial \Omega.$ We can then quantify how the Dirichlet boundary conditions induces the expected rapid change (the evolution of the heat equation far from the boundary does not `feel' the boundary and very nearly preserves mean values) and this change is due to eigenfunctions with nonzero mean.

\subsection{A specific solution} We will work with the characteristic function of a $\varepsilon-$neighborhood of $\partial \Omega:$
\begin{equation} \label{e.specific}
    f(x) = \begin{cases} 1 \qquad &\mbox{if}~d(x, \partial \Omega^c) \leq \varepsilon \\ 0 \qquad &\mbox{otherwise}\,. \end{cases}
\end{equation}
Since the boundary of our domain is smooth, we have that, as $\varepsilon \rightarrow 0$, 
$$ \int_{\Omega} f(x) dx = (1+o(1)) \cdot |\partial \Omega| \cdot \varepsilon\,,$$
and more precise asymptotic expansions on smooth domains (involving curvature for the next order term) are  available but are not of interest here. The key observation, formalized in the next lemma, is that there is substantial loss of $L^1-$mass of the solution of the heat equation at time scale $t \sim \varepsilon^2$.

\begin{lemma} \label{l.mainstep}
Let~$\Omega$ be a~$C^1$ bounded domain in~$\mathbb{R}^d.$ There exist three constants $0 < c_1 < c_2$ and $c_3 > 0$ (depending on $\Omega$) such that, for all $\varepsilon > 0$ sufficiently small (depending on $\Omega$), we have
$$ \int_{\Omega} (e^{c_1 \varepsilon^2 \Delta} f)(x) -   (e^{c_2 \varepsilon^2 \Delta} f)(x)dx \geq  c_3 \varepsilon\,.$$
\end{lemma}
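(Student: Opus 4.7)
The first step is to reformulate the integrated mass probabilistically. By symmetry of the Dirichlet heat kernel $p_t(x,y)$ on $\Omega$,
$$\int_{\Omega}(e^{t\Delta}f)(x)\,dx \;=\; \int_{\Omega}f(y)\Bigl(\int_{\Omega}p_t(x,y)\,dx\Bigr)dy \;=\; \int_{\{d(y,\partial\Omega)\leq\varepsilon\}}w(y,t)\,dy,$$
where $w(y,t):=\mathbb{P}_y(\tau_{\partial\Omega}>t)$ is the probability that a Brownian motion started at $y$ has not hit $\partial\Omega$ by time $t$. The lemma thus reduces to producing a lower bound of order $\varepsilon$ for
$$\int_{\{d(y,\partial\Omega)\leq\varepsilon\}}\bigl[w(y,c_1\varepsilon^2)-w(y,c_2\varepsilon^2)\bigr]\,dy.$$

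The model computation is on a half-space, where the reflection principle gives $w(y,t)=\mathrm{erf}(r/(2\sqrt{t}))$ with $r=d(y,\partial\Omega)$. The heart of the proof is the \emph{uniform} local approximation
$$w(y,t)\;=\;\mathrm{erf}\!\left(\tfrac{r}{2\sqrt{t}}\right)+o(1)\qquad\text{as }\varepsilon\to 0,$$
valid uniformly in $r\leq\varepsilon$ and $c_1\varepsilon^2\leq t\leq c_2\varepsilon^2$. Granting this, I parameterize the $\varepsilon$-tubular neighborhood of $\partial\Omega$ by Fermi coordinates $y=s+r\nu(s)$, $s\in\partial\Omega,\ r\in[0,\varepsilon]$, noting that the Jacobian is $1+o(1)$ uniformly since $\partial\Omega$ is $C^1$. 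Substituting $u=r/\varepsilon$ gives
$$\int_{\{d(y,\partial\Omega)\leq\varepsilon\}}w(y,t)\,dy \;=\;(1+o(1))\,|\partial\Omega|\,\varepsilon\,g\bigl(t/\varepsilon^2\bigr),\qquad g(c):=\int_{0}^{1}\mathrm{erf}\!\left(\tfrac{u}{2\sqrt{c}}\right)du.$$
Differentiation under the integral shows $g'(c)<0$ on $(0,\infty)$, so $g(c_1)-g(c_2)>0$ for any $0<c_1<c_2$. Subtracting the two time levels yields the lemma with, for instance, $c_3=\tfrac{1}{2}|\partial\Omega|\,(g(c_1)-g(c_2))$, once $\varepsilon$ is sufficiently small.

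The main obstacle is the uniform half-space approximation for $w$. Because $\partial\Omega\in C^1$, in a chart centered at any $s_0\in\partial\Omega$ the boundary is the graph $x_d=\phi(x')$ with $\phi(0)=0$ and $\nabla\phi(0)=0$, so $|\phi(x')|\leq |x'|\,\omega(|x'|)$ for a modulus of continuity $\omega$ vanishing at $0$; by compactness of $\partial\Omega$, $\omega$ can be chosen uniformly in $s_0$. At the relevant scale $R=C\varepsilon$, the set $\Omega\cap B_R(s_0)$ is therefore sandwiched between the tangent half-space pushed inward by $R\,\omega(R)=o(\varepsilon)$ and the same half-space pushed outward by $o(\varepsilon)$. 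Domain monotonicity for survival probabilities ($D_1\subset D_2\Rightarrow w_{D_1}\leq w_{D_2}$), applied locally, sandwiches $w(y,t)$ between $\mathrm{erf}((r\pm o(\varepsilon))/(2\sqrt{t}))$, and the uniform continuity of $\mathrm{erf}$ yields the required $o(1)$. The only wrinkle is that the comparison is local, not global; this is handled by the standard Gaussian tail estimate showing that, with probability $1-O(e^{-cC^2})$, the Brownian motion does not leave $B_{C\varepsilon}(y)$ before time $c_2\varepsilon^2$, so the global tail contributes a negligible error once $C$ is taken large. Everything else in the argument is a routine bookkeeping of these $o(1)$ and exponentially small errors against the main term $|\partial\Omega|\,\varepsilon\,(g(c_1)-g(c_2))$.
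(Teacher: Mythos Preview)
Your proof is correct and follows essentially the same route as the paper: both rewrite $\int_\Omega e^{t\Delta}f$ via heat-kernel symmetry as an integral of the Brownian survival probability over the $\varepsilon$-collar, then use the $C^1$ boundary to approximate locally by a half-space and invoke the reflection principle to pin down the transition at $t\sim\varepsilon^2$. If anything, you have supplied more detail than the paper does---the explicit Fermi-coordinate integration, the function $g(c)$, and the domain-monotonicity/Gaussian-tail control of the approximation error are all left implicit in the paper's sketch.
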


We first discuss the proof. It uses the stochastic interpretation and the use of the reflection principle for Brownian motion and is not new, see for example \cite{berg, lou, lu}. Afterwards, we give a probability-free argument why one would expect this scaling.

\begin{proof}[Proof of Lemma 1]
We will rewrite the integral in terms of the Dirichlet heat kernel and then use the fact that the function $f$ is very simple to simplify the expression 
$$  (e^{t\Delta}f)(x) = \int_{\Omega} p_t(x,y) f(y) dy = \int_{d(y, \Omega^c) \leq \varepsilon} p_t(x,y) dy\,.$$
Integrating over $x$ and then exploiting the symmetry of the heat kernel leads to
\begin{align*}
   \int_{\Omega}  (e^{t\Delta}f)(x) dx &= \int_{\Omega} \int_{d(y, \Omega^c) \leq \varepsilon} p_t(x,y) dy dx \\
   &=  \int_{d(y, \Omega^c) \leq \varepsilon} \int_{\Omega} p_t(y,x)  dx dy\,.
\end{align*} 
The inner integral has a probabilistic interpretation: $ \int_{\Omega} p_t(y,x)  dx$ is the likelihood that a Brownian motion started in $y$ and running for $t$ units of time does not exit $\Omega$. At this point, we can invoke basic probability theory. Note first that, by assumption, $\partial \Omega$ is smooth and $\varepsilon>0$ is sufficiently small. This means that the Brownian particle is started very close to the boundary already, at most distance $\varepsilon$, but it also means that, from the point of view of the Brownian particle, the boundary is actually pretty simple: it locally looks like a hyperplane. Instead of following the particle, we can instead simply track its distance to the hyperplane: the projection of a $d-$dimensional Brownian motion onto a one-dimensional line is again a Brownian motion. This has turned the problem into a one-dimensional problem. Luckily, this problem is classical, and is recalled in the next lemma. 
\end{proof}
\begin{lemma}[Reflection principle; e.g.~\cite{katz}] Let
  $\varepsilon > 0$ and let $T_{\varepsilon}$ be the hitting time for one-dimensional Brownian motion started at $B(0) = 0$,
  $T_{\varepsilon} = \inf\left\{ t > 0: B(t) = \varepsilon\right\}.$
  Then
$$ \mathbb{P}(T_{\varepsilon} \leq t) = 2 - 2\Phi\left( \frac{\varepsilon}{\sqrt{t}} \right),$$
where $\Phi$ is the c.d.f. of $\mathcal{N}(0,1)$, i.e.
$$ \Phi(x) = \frac{1}{\sqrt{2\pi}}\int_{-\infty}^x e^{-z^2/2} dz.$$
\end{lemma}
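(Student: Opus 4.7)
The plan is to prove this via the classical reflection argument: decompose the event $\{T_\varepsilon \le t\}$ according to the sign of $B(t) - \varepsilon$, identify the two pieces as equal in probability by a symmetry argument, and then evaluate one of them directly using the Gaussian density of $B(t) \sim \mathcal{N}(0,t)$.

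First I would write
$$\mathbb{P}(T_\varepsilon \le t) = \mathbb{P}(T_\varepsilon \le t,\; B(t) \ge \varepsilon) + \mathbb{P}(T_\varepsilon \le t,\; B(t) < \varepsilon).$$
The first term simplifies immediately because $\{B(t) \ge \varepsilon\} \subset \{T_\varepsilon \le t\}$ by continuity of Brownian paths, so that term equals $\mathbb{P}(B(t) \ge \varepsilon)$. The main work is in showing that the second term is also equal to $\mathbb{P}(B(t) \ge \varepsilon)$.

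To establish this equality I would invoke the strong Markov property at the stopping time $T_\varepsilon$. Conditional on $T_\varepsilon = s \le t$, the process $\widetilde B(u) := B(s+u) - \varepsilon$ is a standard Brownian motion starting at $0$, independent of $\mathcal{F}_{T_\varepsilon}$. Hence by the symmetry $\widetilde B \stackrel{d}{=} -\widetilde B$,
$$\mathbb{P}\bigl(B(t) - \varepsilon > 0 \mid T_\varepsilon = s\bigr) = \mathbb{P}\bigl(\widetilde B(t-s) > 0\bigr) = \mathbb{P}\bigl(\widetilde B(t-s) < 0\bigr) = \mathbb{P}\bigl(B(t) - \varepsilon < 0 \mid T_\varepsilon = s\bigr).$$
Integrating against the law of $T_\varepsilon$ on $[0,t]$ gives
$$\mathbb{P}(T_\varepsilon \le t,\; B(t) > \varepsilon) = \mathbb{P}(T_\varepsilon \le t,\; B(t) < \varepsilon),$$
and the exceptional event $\{B(t) = \varepsilon\}$ has probability zero.

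Combining these two observations yields
$$\mathbb{P}(T_\varepsilon \le t) = 2\,\mathbb{P}(B(t) \ge \varepsilon) = 2\Bigl(1 - \Phi\bigl(\varepsilon/\sqrt{t}\bigr)\Bigr) = 2 - 2\Phi\bigl(\varepsilon/\sqrt{t}\bigr),$$
where the middle equality uses that $B(t)/\sqrt{t}$ is standard normal. The only subtle point is the application of the strong Markov property at $T_\varepsilon$, which requires knowing that $T_\varepsilon$ is an a.s. finite stopping time; this holds because one-dimensional Brownian motion is recurrent, so no further work is needed.
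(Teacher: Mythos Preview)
Your argument is correct and is exactly the classical reflection principle proof one finds in standard references such as Karatzas--Shreve. The paper itself does not prove this lemma at all: it is quoted as a known result with a citation to \cite{katz}, and the surrounding text immediately uses the formula to identify the time scale $t\sim\varepsilon^2$ in the proof of Lemma~1. So there is nothing to compare against beyond noting that your write-up supplies the textbook derivation the paper omits.
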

Referring the reader to~\cite{katz} for a proof, let us remark that when $\sqrt{t} \ll \varepsilon$, then $\Phi\left( \varepsilon/\sqrt{t} \right) \sim 1$ and the likelihood of hitting the boundary within $t$ units of time is close to 0. When $\sqrt{t} \gg \varepsilon$, then $\Phi\left( \varepsilon/\sqrt{t} \right) \sim \Phi(0) = 1/2$ and the likelihood of having hit the boundary is close to 1. The transition phase happens at exactly $t \sim \varepsilon^2$ and this can be quantified to any desired level of precision.\\

Another argument would be based on using a result of van den Berg and Davies \cite{berg-davies}. They show that if $\Omega$ has a $C^1-$boundary, if $  1_{\Omega}$ is the indicator function of $\Omega \subset \mathbb{R}^d$ and if $e^{t\Delta} 1_{\Omega}$ is the solution of the heat equation in $\mathbb{R}^d$, then
$$ \int_{\Omega} e^{t\Delta}1_{\Omega}(x) dx = |\Omega| - \frac{2 |\partial \Omega|}{\sqrt{\pi}} \sqrt{t} + \mathcal{O}_{\Omega}(t).$$
The error term can be further refined if $\partial \Omega$ has more regularity, see van den Berg and Le Gall \cite{berg} (but this is not required here). This asymptotic expansion shows that we expect a loss of $\sim \varepsilon$ units of mass within $\varepsilon^2$ units of time. The function $1_{\Omega}(x)$ in van den Berg-Davies and $f(x)$, in our setting, are different. However, using linearity of the heat equation, we have
$$  \int_{\Omega} e^{t\Delta}1_{\Omega}(x) dx  =  \int_{\Omega} e^{t\Delta}f(x) dx +  \int_{\Omega} e^{t\Delta}(1_{\Omega}(x) - f(x)) dx.$$
The function $1_{\Omega}(x) - f(x)$ is supported at distance $\varepsilon$ from the boundary. Coupling this with rapid decay of the heat kernel past scale $\sim \sqrt{t}$ can be used to give an alternative proof.

\subsection{Four short Lemmata}
At this point, we introduce the Dirichlet heat kernel $p_t(x,y)$. We will not use any particular properties of the Dirichlet heat kernel beyond the fact that
$$ (e^{t\Delta}f)(x) = \int_{\Omega} p_t(x,y) f(y) dy\,,$$
and its spectral resolution into Laplacian eigenfunctions (though, one could argue, we used some of its properties implicitly in the proof of Lemma 1).
\begin{lemma}
Let~$f$ denote the initial data in~\eqref{e.specific}. We have
$$  \int_{\Omega} (e^{t \Delta} f)(x) dx = \sum_{k \in A}^{} e^{-\lambda_k t} \left( \int_{d(x, \Omega^c) \leq \varepsilon} \phi_k(x) dx\right)  \left( \int_{\Omega} \phi_k(y) dy\right) \,,$$
\end{lemma}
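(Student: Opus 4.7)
The plan is a direct spectral computation that combines \eqref{e.simple} with the explicit choice of $f$ in \eqref{e.specific}. Since $(\phi_k)_{k\geq 1}$ is an orthonormal basis of $L^2(\Omega)$, I would begin by writing out the spectral expansion
$$(e^{t\Delta}f)(x) \;=\; \sum_{k=1}^{\infty} e^{-\lambda_k t}\,\langle f,\phi_k\rangle\,\phi_k(x),$$
which, for any fixed $t>0$, converges in $L^2(\Omega)$ (in fact much more strongly, thanks to the exponential factor $e^{-\lambda_k t}$ and Weyl's law on the growth of the $\lambda_k$).

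The key observation is that for the particular $f$ defined in \eqref{e.specific},
$$\langle f,\phi_k\rangle \;=\; \int_\Omega f(x)\phi_k(x)\,dx \;=\; \int_{d(x,\Omega^c)\leq\varepsilon}\phi_k(x)\,dx,$$
so the first integral in the desired identity already appears naturally as the $k$-th Fourier coefficient of $f$. Integrating the spectral expansion against $\mathbf{1}_\Omega \in L^2(\Omega)$, and exchanging sum and integral (which is legal for $t>0$ because $\|\phi_k\|_{L^2}=1$, $|\langle f,\phi_k\rangle|\leq \|f\|_{L^2}$, $|\int_\Omega \phi_k|\leq |\Omega|^{1/2}$ by Cauchy--Schwarz, while $e^{-\lambda_k t}$ decays super-polynomially in $k$), yields
$$\int_\Omega (e^{t\Delta}f)(x)\,dx \;=\; \sum_{k=1}^{\infty} e^{-\lambda_k t}\left(\int_{d(x,\Omega^c)\leq\varepsilon}\phi_k(x)\,dx\right)\left(\int_\Omega \phi_k(y)\,dy\right).$$
By the very definition of $A$, any term with $k\notin A$ carries a factor $\int_\Omega \phi_k(y)\,dy = 0$ and drops out, so the sum collapses to one indexed by $k\in A$, which is exactly the claimed identity.

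I expect no serious obstacle here: the statement is essentially the identity \eqref{e.simple} with the abstract inner product $\langle f,\phi_k\rangle$ rewritten using the concrete form of $f$, together with the cosmetic step of restricting the index set to $A$. The only technical point worth noting is the interchange of summation and integration, and that is handled immediately by the parabolic smoothing of $e^{t\Delta}$ at any positive time.
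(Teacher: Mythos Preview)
Your proposal is correct and essentially identical to the paper's argument: the paper writes $\int_\Omega e^{t\Delta}f = \int_\Omega \int_{d(y,\Omega^c)\le\varepsilon} p_t(x,y)\,dy\,dx$ and then inserts the spectral resolution $p_t(x,y)=\sum_k e^{-\lambda_k t}\phi_k(x)\phi_k(y)$, which is exactly your spectral expansion of $e^{t\Delta}f$ followed by integration against $\mathbf 1_\Omega$. The only difference is cosmetic (operator versus kernel language), and you add a brief justification for the sum/integral interchange that the paper omits.
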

\begin{proof}
This follows quickly from 
\begin{align*}
  \int_{\Omega} (e^{t \Delta} f)(x) dx =   \int_{\Omega} \int_{d(x, \Omega^c) \leq \varepsilon} p_t(x,y) dy dx\,,
\end{align*}
together with the spectral resolution of the heat kernel
$$  p_t(x,y) = \sum_{k=1}^{\infty} e^{-\lambda_k t} \phi_k(x) \phi_k(y)\,.$$
\end{proof}

The next ingredient is an upper bound on the mean value of an eigenfunction that follows from a result of Hassell-Tao.
\begin{lemma}[Hassell-Tao \cite{hassell}]
We have, for some $c_{\Omega}$ depending only on the domain,
$$ \left| \int_{\Omega} \phi_k(x) dx \right| \leq \frac{c_{\Omega}}{\sqrt{\lambda_k}}\,.$$
\end{lemma}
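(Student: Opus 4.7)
The plan is to use integration by parts together with the Hassell--Tao upper bound on boundary normal derivatives. Since $-\Delta \phi_k = \lambda_k \phi_k$ in $\Omega$, we have
\begin{equation*}
\int_{\Omega} \phi_k(x)\, dx \;=\; -\frac{1}{\lambda_k} \int_{\Omega} \Delta \phi_k(x)\, dx \;=\; -\frac{1}{\lambda_k} \int_{\partial \Omega} \partial_\nu \phi_k(x)\, d\mathcal{H}^{d-1}(x),
\end{equation*}
where the second equality is the divergence theorem (the Dirichlet condition $\phi_k|_{\partial\Omega} = 0$ is not needed here, only the smoothness coming from the $C^1$ boundary). This reduces the problem to controlling the single boundary integral $\int_{\partial \Omega} \partial_\nu \phi_k$.

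Next, I would apply the Cauchy--Schwarz inequality on $\partial\Omega$ to obtain
\begin{equation*}
\left| \int_{\partial \Omega} \partial_\nu \phi_k \, d\mathcal{H}^{d-1} \right| \;\leq\; |\partial \Omega|^{1/2} \, \| \partial_\nu \phi_k \|_{L^2(\partial \Omega)}.
\end{equation*}
Inserting this into the previous identity yields
\begin{equation*}
\left| \int_{\Omega} \phi_k(x)\, dx \right| \;\leq\; \frac{|\partial\Omega|^{1/2}}{\lambda_k}\, \|\partial_\nu \phi_k\|_{L^2(\partial \Omega)}.
\end{equation*}

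The essential analytic input is then the Hassell--Tao upper bound (which is the $L^2$ version of Rellich's identity, valid on $C^1$ domains): for $L^2$-normalized Dirichlet eigenfunctions one has
\begin{equation*}
\|\partial_\nu \phi_k\|_{L^2(\partial \Omega)} \;\leq\; C_\Omega \sqrt{\lambda_k}.
\end{equation*}
Combining this with the previous display produces exactly the desired estimate with $c_\Omega = C_\Omega |\partial\Omega|^{1/2}$. The only ``hard'' step is the Hassell--Tao bound itself, which on $C^1$ domains can be proved by pairing the eigenvalue equation against a vector field extending the outward normal and integrating by parts (a Rellich--Pohozaev type identity); all remaining steps are purely formal manipulations.
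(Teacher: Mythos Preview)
Your proof is correct and follows exactly the same route as the paper: rewrite $\int_\Omega \phi_k$ as $-\lambda_k^{-1}\int_{\partial\Omega}\partial_\nu\phi_k$ via the divergence theorem, apply Cauchy--Schwarz on $\partial\Omega$, and then invoke the Hassell--Tao bound $\|\partial_\nu\phi_k\|_{L^2(\partial\Omega)}\lesssim\sqrt{\lambda_k}$. (You even keep track of the sign more carefully than the paper does, though of course this is irrelevant once absolute values are taken.)
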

\begin{proof}
We first argue with the divergence theorem that
$$  \int_{\Omega} \phi_k(x) dx = \frac{1}{\lambda_k} \int_{\partial \Omega} \frac{\partial \phi_k}{\partial n} d\mathcal{H}^{d-1} \leq \frac{\sqrt{|\partial \Omega|}}{\lambda_k}\cdot \left\| \frac{\partial \phi_k}{\partial n}\right\|_{L^2(\partial \Omega)}\,.$$
At this point we invoke Hassell-Tao \cite{hassell}
saying that
$$ \left\| \frac{\partial \phi_k}{\partial n}\right\|_{L^2(\partial \Omega)} \leq c_{\Omega} \sqrt{\lambda_k}\,,$$
from which the claim follows.
\end{proof}

The third Lemma also follows by combining the coarea formula with the Hassell-Tao bound \cite[Lemma 3.2]{hassell}: that lemma yields an upper bound for the~$L^2$ norm of an eigenfunction restricted to the level set~$\{x \in \Omega: d(x,\partial \Omega) = \varepsilon\}.$

\begin{lemma}[Consequence of Hassell-Tao] There exists $0 < \delta \leq C < \infty$, depending only on $\Omega$, such that for all $0 < \varepsilon < \delta$
$$ \int_{d(x, \partial \Omega) \leq \varepsilon} \phi_k(x)^2 \leq C \lambda_k \varepsilon^3$$
\end{lemma}

 \begin{proof}
     By the coarea formula, since the gradient of the distance function has unit magnitude, 
     \begin{multline*}
         \int_{d(x,\partial \Omega) \leqslant \varepsilon} \phi_k(x)^2\,dx =  \int_{d(x,\partial \Omega) \leqslant \varepsilon} \phi_k(x)^2|\nabla d(x,\partial \Omega)|\,dx \\= \int_0^\varepsilon \biggl[ \int_{d(x,\partial \Omega) = r} \phi_k^2(x)\,d\mathcal{H}^{d-1}(r) \biggr]\,dr \leqslant \int_0^{\varepsilon} C\lambda_k r^2\,dr \leqslant C\lambda_k \varepsilon^3\,.
     \end{multline*}
 \end{proof}

The fourth Lemma is completely independent of Laplacian eigenfunctions and deals with estimating a particular infinite series that will show up later in the argument.

\begin{lemma}
Let $c_{\Omega} > 0$ be fixed and $\eta > 0$ be arbitrary. Then there exists $c>0$ (depending only on $d, c_{\Omega}$ and $\eta$) such that, with the choice
$$ B = \frac{c}{\varepsilon^d} \left[ \log\left(\frac{1}{\varepsilon}\right) \right]^{d/2}$$
and for all $\varepsilon$ sufficiently small,
$$  \sum_{k=1 \atop k \geq B}^{\infty}  e^{-c_{\Omega} k^{2/d}  \varepsilon^2} \leq 1\,.$$
\end{lemma}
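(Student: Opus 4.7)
The plan is to pass from the sum to an integral, change variables so the integral becomes an (upper) incomplete gamma function at a large argument, and then use the standard tail asymptotic $\Gamma(s,x) \sim x^{s-1} e^{-x}$ as $x\to\infty$ to see that the polynomial/logarithmic correction is harmless once $c$ is chosen large enough. Concretely, I would set $k_0 := (B/c_\Omega)^{d/2}$ so that the summation condition $c_\Omega k^{2/d} \geq B$ is just $k \geq k_0$, and note that for $\varepsilon$ small the function
$$ h(k) := \frac{e^{-c_\Omega k^{2/d}\varepsilon^2}}{\sqrt{c_\Omega k^{2/d}}} $$
is positive and monotonically decreasing on $[k_0,\infty)$ (the exponential dominates everything once $c_\Omega k^{2/d}\varepsilon^2 \gtrsim 1$, which is exactly what $k \geq k_0$ guarantees). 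Hence the sum is bounded by $h(k_0)$ plus $\int_{k_0}^\infty h(k)\,dk$, and the integral dominates.

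Next, in the integral I substitute $u = c_\Omega k^{2/d}$ (so that $k = (u/c_\Omega)^{d/2}$ and $dk = \tfrac{d}{2 c_\Omega}(u/c_\Omega)^{d/2-1}\,du$). This converts the integral into a constant multiple of
$$ \int_B^\infty u^{d/2 - 3/2}\, e^{-u\varepsilon^2}\, du, $$
and a further substitution $v = u\varepsilon^2$ turns it into
$$ \varepsilon^{1-d}\int_{B\varepsilon^2}^\infty v^{d/2-3/2}\, e^{-v}\, dv \;=\; \varepsilon^{1-d}\,\Gamma\!\left(\tfrac{d-1}{2},\, B\varepsilon^2\right). $$
By construction $B\varepsilon^2 = c\log(1/\varepsilon)$, so the lower limit goes to infinity as $\varepsilon\to 0$.

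I then invoke the classical asymptotic $\Gamma(s,x) = x^{s-1} e^{-x}(1 + O(1/x))$ as $x \to\infty$, which gives
$$ \Gamma\!\left(\tfrac{d-1}{2},\, c\log(1/\varepsilon)\right) \;\lesssim\; \bigl(c\log(1/\varepsilon)\bigr)^{(d-3)/2}\,\varepsilon^{c}. $$
Plugging back in, the whole sum is at most (up to constants depending only on $d$ and $c_\Omega$)
$$ \varepsilon^{1-d+c}\,\bigl(\log(1/\varepsilon)\bigr)^{(d-3)/2}. $$
Comparing to the required bound $\eta\sqrt{\varepsilon}$, it suffices to pick $c$ with $1 - d + c > 1/2$, e.g.\ $c = d$, so that the power of $\varepsilon$ beats $\varepsilon^{1/2}$ and absorbs the logarithmic factor and the constant, leaving the tail below $\eta\sqrt{\varepsilon}$ for all sufficiently small $\varepsilon$.

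The only delicate step is verifying that the monotonicity/integral comparison is valid starting from $k_0$; this is the sole place where one needs $\varepsilon$ to be small. The rest is purely bookkeeping: the exponential decay in $e^{-c\log(1/\varepsilon)} = \varepsilon^c$ from the incomplete gamma asymptotic is the engine of the estimate, and the logarithmic factor in $B$ is chosen precisely so that this gives a polynomial gain in $\varepsilon$ that can be tuned to beat any fixed negative power required on the right-hand side.
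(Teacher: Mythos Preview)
Your argument is correct and essentially the same as the paper's: pass to an integral, change variables to get an incomplete gamma function $\varepsilon^{1-d}\,\Gamma\!\bigl(\tfrac{d-1}{2},\,B\varepsilon^2\bigr)$, and invoke the tail asymptotic $\Gamma(s,x)\lesssim x^{s-1}e^{-x}$ so that $e^{-c\log(1/\varepsilon)}=\varepsilon^{c}$ beats any fixed power of $\varepsilon$ for $c$ large. The only differences are cosmetic: the paper treats $d=2$ separately as a geometric series (unnecessarily, since your gamma-function argument covers it), and your monotonicity of $h$ actually holds for all $k\ge 1$, not just $k\ge k_0$.
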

\begin{proof} We have, for $B \gg 1$,
\begin{align*}
\sum_{ k \geq B}^{}   e^{-c k^{2/d} \varepsilon^2} &\lesssim \int_{B}^{\infty} e^{-c \varepsilon^2 x^{2/d}} dx \leq  \frac{1}{B^{2/d}} \int_{B}^{\infty} e^{-c \varepsilon^2 x^{2/d}}  x^{2/d} dx \\
&= \frac{d}{2} \frac{1}{B^{2/d}} \cdot \varepsilon^{-2-d} \cdot \Gamma\left(1 + \frac{d}{2}, \varepsilon^2 B^{2/d}\right).
\end{align*}
 As long as $ B^{2/d} \varepsilon^2 \gg 1$, we may apply a classical asymptotic (see Abramowitz \& Stegun \cite{AS}) for the incomplete gamma function
$ \Gamma \left(s, x \right) \lesssim_s x^{s-1} \cdot e^{-x}.$
Thus leads us to 
\begin{align*}
\sum_{ k \geq B}^{}   e^{-\lambda_k c_1 \varepsilon^2}  &\leq \frac{d}{2} \frac{1}{B^{2/d}} \cdot \varepsilon^{-2-d} \left( \varepsilon^2 B^{2/d} \right)^{d/2} e^{-\varepsilon^2 B^{2/d}} \\
&\leq c_{\Omega} \cdot \varepsilon^{-2} \cdot B^{1- \frac{2}{d}} e^{-\varepsilon^2 B^{2/d}}.
\end{align*}
 Then, with that choice of $B$, we have $-\varepsilon^2 B^{2/d} \sim - c^{2/d} \log(1/\varepsilon)$ while the term in front is polynomial in $\varepsilon$. By making $c$ sufficiently large, the result follows.
 \end{proof}

\begin{proof}[Proof of the Theorem]
     First, let us assume that $\varepsilon > 0$ is arbitrary (but sufficiently small for all estimates to be applicable). Combining Lemma 1 and Lemma 2 implies that, with absolute constants $ 0 < c_1 < c_2$ and $c_3 >0$ uniformly in $\varepsilon \in (0,\varepsilon_0)$
    $$ \sum_{k \in A}^{} (e^{-\lambda_k c_1 \varepsilon^2} -e^{-\lambda_k c_2 \varepsilon^2}) \left( \int_{d(x, \Omega^c) \leq \varepsilon} \phi_k(x) dx\right)  \left( \int_{\Omega} \phi_k(y) dy\right) \geq c_3 \varepsilon.$$
    At this point, using the triangle inequality, Lemma 3, Lemma 4, Lemma 5 and the Cauchy-Schwarz inequality, up to universal constants depending only on $\Omega$,
    \begin{align*}
        c_3 \varepsilon &\leq \sum_{k \in A}^{} (e^{-\lambda_k c_1 \varepsilon^2} -e^{-\lambda_k c_2 \varepsilon^2}) \left| \int_{d(x, \Omega^c) \leq \varepsilon} \phi_k(x) dx\right|  \left| \int_{\Omega} \phi_k(y) dy\right| \\
        &\lesssim_{} \sum_{k \in A}^{} (e^{-\lambda_k c_1 \varepsilon^2} -e^{-\lambda_k c_2 \varepsilon^2}) \sqrt{\varepsilon} \left( \int_{d(x, \Omega^c) \leq \varepsilon} \phi_k(x)^2 dx\right)^{1/2} \frac{1}{\sqrt{\lambda_k}}\\
         &\leq \sqrt{\varepsilon}\sum_{k \in A}^{}   (e^{-\lambda_k c_1 \varepsilon^2} -e^{-\lambda_k c_2 \varepsilon^2}) \left( \lambda_k \varepsilon^3 \right)^{1/2}  \frac{1}{\sqrt{\lambda_k}}  \,,
    \end{align*}
    and thus for some $c>0$ depending only on $\Omega$ and all sufficiently small $\varepsilon$ 
 $$\sum_{k \in A}^{}     (e^{-\lambda_k c_1 \varepsilon^2} -e^{-\lambda_k c_2 \varepsilon^2})  \geq  \frac{c_3}{\varepsilon}\,.$$   
 These terms are exponentially decaying and it's clear that past a certain index, they will contribute little to the sum. Trivially, for any~$B>0,$
 $$\sum_{k \in A \atop k \geq B}^{}    (e^{-\lambda_k c_1 \varepsilon^2} -e^{-\lambda_k c_2 \varepsilon^2}) \leq \sum_{ k \geq B}^{}   e^{-\lambda_k c_1 \varepsilon^2}\,.$$
At this point, we can invoke Weyl's law $\lambda_k = (1+o(1))\cdot c_{\Omega} \cdot k^{2/d}$ to deduce, for $\varepsilon>0$ sufficiently small, with the choice of~$B$ from Lemma 5 that 
$$ \sum_{\lambda_k \geq c_2 \log\left( \frac{1}{\varepsilon} \right) \frac{1}{\varepsilon^2}}^{}   \frac{1}{\sqrt{\lambda_k}} e^{-\lambda_k c_1 \varepsilon^2} \leq 1\,,$$
 and therefore,
 $$ \sum_{k \in A \atop \lambda_k \leq c_2 \log\left( \frac{1}{\varepsilon} \right) \frac{1}{\varepsilon^2}}   \left(e^{-\lambda_k c_1 \varepsilon^2} -  e^{-\lambda_k c_2 \varepsilon^2}\right)  \geq \frac{1}{2} \frac{c_3}{\varepsilon}\,.$$
 At this point, we quickly derive an elementary inequality. Note that, for $c_1 < c_2$ and all $\varepsilon > 0$,
 $$ e^{- c_1 x} - e^{-c_2 x} = e^{-c_1 x} \left( 1 - e^{ - (c_2 - c_1) x} \right) \leq (c_2 - c_1)x e^{-c_1 x},$$
 where the second inequality follows from the fact that $ 1 - e^{ - (c_2 - c_1) x}$ vanishes in $x=0$ and has derivative $\leq c_2 - c_1$ when $x \geq 0$. Thus
\begin{align*}
\frac{1}{2} \frac{c_3}{\varepsilon}  &\leq  \sum_{k \in A \atop \lambda_k \leq c_2 \log\left( \frac{1}{\varepsilon} \right) \frac{1}{\varepsilon^2}}  \left(e^{- c_1 \lambda_k \varepsilon^2} -  e^{- c_2 \lambda_k  \varepsilon^2}\right) \lesssim_{\Omega} \sum_{k \in A \atop \lambda_k \leq c_2 \log\left( \frac{1}{\varepsilon} \right) \frac{1}{\varepsilon^2}}  \lambda_k \varepsilon^2 e^{- c_1 \lambda_k \varepsilon^2}.
\end{align*}
Using $x e^{-c_1 x} \lesssim 1$ for $x \geq 0$, we deduce
and therefore
$$   \frac{c_3}{\varepsilon} \lesssim  \sum_{k \in A \atop \lambda_k \varepsilon^2 \leq c_2 \log\left( \frac{1}{\varepsilon} \right)} 1 =  \# \left\{k \in A: \lambda_k \leq c \log\left( \frac{1}{\varepsilon} \right)\varepsilon^{-2} \right\}\,.$$
Applying the Weyl asymptotic, we have that
$$ \# \left\{k \in \mathbb{N}: \lambda_k \leq c_2 \log\left( \frac{1}{\varepsilon} \right)\varepsilon^{-2} \right\}  \sim c_3  \log\left( \frac{1}{\varepsilon} \right)^{d/2} \frac{1}{\varepsilon^d}=n\,.$$
Setting $\varepsilon = \sqrt{\log{n}} \cdot n^{-1/d}$ then shows the desired result.
\end{proof}

\end{document}